\makeatletter \@addtoreset{equation}{section} \makeatother
\renewcommand\thetable{\thesection.\@arabic\c@table}
\theoremstyle{plain}
\newtheorem{maintheorem}{Theorem}
\newtheorem{theorem}{Theorem }[section]
\newtheorem{corollary}[theorem]{Corollary}
\newtheorem{maincorollary}{Corollary}
\theoremstyle{definition} \theoremstyle{remark}
\newtheorem{remark}[theorem]{Remark}
       \newcommand{\Si}{\Sigma}
\newcommand{\R}{\mathbb{R}}
\newcommand{\Leb}{Leb}
\newcommand{\cM}{\mathcal{M}}
\newcommand{\cA}{\mathcal{A}}
\begin{document}

\title[Kac's lemma for suspension flows]{A version of Kac's lemma on first return  \\ times for suspension flows}

\author{Paulo Varandas}

\address{Departamento de Matem\'atica, Universidade Federal da Bahia\\
  Av. Ademar de Barros s/n, 40170-110 Salvador, Brazil \& CMUP, University of Porto.}
\email{paulo.varandas@ufba.br}

\date{\today}

\begin{abstract} 
In this article we study the mean return times to a given set for suspension flows. In the discrete time
setting, this corresponds to the classical version of Kac's lemma \cite{K} that the mean of the first return
time to a set with respect to the normalized probability measure is one. In the case of suspension flows we 
provide formulas to compute the mean return time. Positive measure sets on cross sections are also considered.In particular, this varies linearly with continuous
reparametrizatons of the flow and takes into account the mean escaping time from the original set.
Relation with entropy and returns to positive measure sets on cross sections is also considered.
\end{abstract}

\keywords{Suspension flows, Poincar\'e recurrence, hitting times, Kac's lemma}

\subjclass[2010]{Primary: 37B20 Secondary: 28D10, 37C10 }

\maketitle

\section{Introduction}

The construction of invariant measures and the study of their
statistical properties are the main goals of the ergodic theory in the 
attempt to describe the behavior of a discrete and continuous time 
dynamical system. 
In the presence of invariant measures the celebrated Poincar\'e' s recurrence theorem
guarantees that recurrence occurs almost everywhere and, from this qualitative statement,
a natural question is to ask for quantitative results on the recurrence. Given a measure preserving map 
$f: (X,\mu) \to (X,\mu)$ and a positive measure set $A\subset X$ consider the 
\emph{first return time} to the set $A$ given by
$$
n_A(x)=\inf\{k\ge 1 : f^k(x)\in A\}.
$$
In 1947, Kac \cite{K} established an expression to the expectation of the return time proving that
$$
\int_A n_A(x) \,d\mu_A= 1,
$$
where $\mu_A(\cdot) =\mu(\cdot \cap A)/ \mu(A)$ denotes the normalized restriction of $\mu$ to $A$.
Such a quantitative estimate has been very useful tool in the study of other recurrence properties
(see e.g. \cite{CR,H,HP,R,S} and the references therein)

In the time-continuous setting the situation is substantially different. It is clear that if a flow $(X_t)_t$ preserves
a probability measure $\mu$ then it is an invariant probability measure for discrete-time 
transformation obtained as the time-$t$ map $f=X_t$ of the flow and Kac' s lemma holds for $(f,\mu)$. 
However, this does not constitute a true time-continuous quantitative recurrence estimate. In fact, given a
continuous flow $(X_t)_t$, an open set $A$ and a point $x\in A$, the first return time of $x$ to the set $A$ should 
be considered after the time $e_A(x,t)>0$ that orbit of the point $x$ needs to exit $A$ 
(c.f. Section~\ref{s.suspension.flows} for the precise definition).

Our purpose here is to prove a time-continuous quantitative Kac-like formula for first return times for
suspension flows for positive measure sets both in the cross-section and ambient spaces. 
Our first motivation follows from Ambrose and Kakutani \cite{Am,AK} which proved that 
any aperiorid flow $(Y_t)_t$ on a probability space $(M,\nu)$ without singularities is  metrically isomorphic to a suspension
flow $(Z_t)_t$. More precisely, there exists a measurable transformation $f: \Sigma \to \Sigma$, an 
$f$-invariant probability measure $\mu_\Sigma$ and a roof function $\tau:\Sigma \to \mathbb R_0^+$ defining 
a suspension flow $(Z_t)_t$ on $\Sigma_\tau$ that preserves $\mu=\mu_\Sigma \times \Leb / \int \tau d\mu_\Sigma$, and there exists a measure preserving $\psi: (M,\nu) \to (\Sigma_\tau,\mu)$ that is a bijection (modulo 
zero measure sets) and satisfying 
$
Z_t\circ \psi = \psi \circ Y_t
$
and
$
	Y_t\circ \psi^{-1} = \psi^{-1} \circ Z_t
$
for all $t\ge 0$.
Our second motivation is that, using finitely many tubular neighborhoods or the Hartman-Grobman
theorem around singularities, smooth flows on compact manifolds that exhibit some hyperbolicity 
often can be modelled by suspension flows. This is true for hyperbolic flows (see e.g. \cite{Bo73,Rt}) 
for geometric Lorenz and three-dimensional singular-hyperbolic flows  (see e.g. \cite{AP}) 
and for smooth three-dimensional flows with positive topological entropy (see \cite{LS}).  
A final motivation is that it is expected to have a large utility and large amount of applications, 
including the study of large deviations for return times in cylinders, fluctuations theorems
and study of the recurrence properties for Axiom A flows. 
Our main results below assert that the expectation of the first return time to open sets takes into account
the mean return time of the roof function. Moreover, the mean return time to subsets of the global cross section
can be understood as a quotient of the corresponding entropies of the flow and the first return time map on 
the base. The detailed statements will be given in the next section.

After this work was complete we were informed by J.-R. Chazottes  
of the unnoticed and very interesting work by G. Helberg~\cite{Hel} where the author addresses the problem 
of return time estimates for positive measure sets for measurable and continuous flows. Since this is a 
rather unknown result, never quoted, we shall describe his main results in Section~\ref{s.suspension.flows} for
completeness.

\section{Suspension flows}\label{s.suspension.flows}

Given a topological space $\Sigma$, a measurable map $f: \Si \to \Si$ and a
roof function $\tau:\Si \to \R^{+}$ that is bounded away from zero consider the quotient space
$$
\Sigma_\tau=\{(x,s) \in \Si \times \R^{+} : 0 \leq s \leq \tau(x)\} / \sim
$$
obtained by the equivalence relation that $(x,\tau(x)) \sim (f(x),0)$ for every $x\in \Sigma$. The
suspension flow $(X_t)_t$ on $\Sigma_\tau$ associated to $(f,\Sigma,\tau)$ is defined by 
as the 'vertical displacement' $X_t(x,s)=(x,t+s)$ whenever the expression is well defined. More precisely,
\begin{equation}\label{eq:1}
X_s(x,t)
	=\Big(f^k(x), t+s-\sum_{j=0}^{k-1} \tau(f^j(x)) \Big)
\end{equation}
where $k=k(x,t,s)\ge 0$ is determined by
$
\sum_{j=0}^{k-1} \tau(f^j(x)) \leq t+s < \sum_{j=0}^{k} \tau(f^j(x)).
$
We shall refer to $\Sigma$ as a cross-section to the flow.
Since $\tau$ is bounded away from zero there
is a natural identification between the space $\cM_X$ of
$(X_t)_t$-invariant probability measures and the space $\cM_f(\tau)$ of
$f$-invariant probability measures $\mu$ so that $\tau \in L^1(\mu)$.
Namely, if $m$ denotes the Lebesgue measure in $\R$ then the map
\begin{equation}\label{eq:normal}
\begin{array}{cccc}
L: & \cM_f(\tau) &    \to  &  \cM_X\\
   &     \mu       & \mapsto & \bar\mu := \frac{(\mu \times m)|_{\Sigma_\tau}}{\int_\Sigma \tau \, d\mu }
\end{array}
\end{equation}
is a bijection. 
For that reason in some situations one reduces some ergodic properties from the suspension semiflow to
the first return map to the global Poincar\'e section $\Sigma\times\{0\}$. For instance, while the measure 
induced on the cross section can detect ergodicity of the flow it fails to detect the mixing properties. 

From now on, and if otherwise stated, we shall fix an $f$-invariant ergodic probability measure $\mu$ 
and let $\bar\mu$ denote the corresponding flow-invariant ergodic probability measure.
We start by recalling Kac's qualitative recurrence estimates for measurable maps. 

\begin{theorem}[Kac's lemma]\label{leKac}
Let $f: \Sigma \to\Sigma$ be a measurable map preserving a probability measure $\mu$. For any 
measurable set $A$ with $\mu(A)>0$ the hitting time $n_A(\cdot): \Sigma \to \mathbb N$ 
defined by
$
n_A(x)=\inf\{ k\ge 1 : f^k(x) \in A\}
$
satisfies 
$$
\int_A n_A(x) \, d\mu=1
$$
Equivalently, $\int_A n_A(x) \, d\mu_A=\frac1{\mu(A)}$ is inversely proportional to the measure of $A$. 
\end{theorem}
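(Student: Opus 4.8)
The plan is to reduce the integral to a sum over level sets of the return time and then evaluate that sum by exploiting the measure-preservation of $f$ through a telescoping identity. First I would decompose $A$ according to the value of the return time, setting $A_n=\{x\in A: n_A(x)=n\}$ for $n\ge 1$. Poincar\'e recurrence (valid since $\mu$ is invariant) guarantees that $n_A<\infty$ for $\mu$-almost every $x\in A$, so $A=\bigsqcup_{n\ge 1}A_n$ modulo a null set, and monotone convergence gives $\int_A n_A\,d\mu=\sum_{n\ge 1} n\,\mu(A_n)$. It therefore suffices to show this series equals $1$.

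The heart of the argument is to relate the return-time levels $A_n$ to the first-entry structure of the orbit. Let $h_A(x)=\inf\{k\ge 0 : f^k(x)\in A\}$ be the first hitting time and put $D_k=\{x\in\Sigma : h_A(x)=k\}$, so that $D_0=A$ and the $D_k$ are pairwise disjoint. Working with preimages rather than images, so that the reasoning stays valid even when $f$ is not invertible, one checks the partition identity
$$
f^{-1}(D_k)=A_{k+1}\sqcup D_{k+1},\qquad k\ge 0,
$$
simply by splitting $f^{-1}(D_k)$ according to whether the base point lies in $A$ or not. Since $f$ preserves $\mu$, this yields $\mu(D_k)=\mu(A_{k+1})+\mu(D_{k+1})$, equivalently $\mu(A_n)=\mu(D_{n-1})-\mu(D_n)$ for $n\ge 1$; in particular $\mu(D_k)$ is non-increasing in $k$.

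I would then evaluate the series by Abel summation. A direct telescoping of $\sum_{n=1}^{N} n\,(\mu(D_{n-1})-\mu(D_n))$ gives
$$
\sum_{n=1}^{N} n\,\mu(A_n)=\sum_{m=0}^{N-1}\mu(D_m)-N\,\mu(D_N).
$$
Ergodicity enters here: the set $B=\bigcup_{k\ge 0}D_k$ of points whose forward orbit meets $A$ satisfies $f^{-1}(B)\subseteq B$ together with $\mu(f^{-1}B)=\mu(B)$, so $B$ is invariant modulo a null set and, as $\mu(A)>0$, ergodicity forces $\mu(B)=1$; hence $\sum_{m\ge 0}\mu(D_m)=1$. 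Letting $N\to\infty$ then finishes the computation, provided the boundary term vanishes.

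That last point is where I expect the only real care to be needed. The convergence of $\sum_m\mu(D_m)$ together with the monotonicity of $\mu(D_m)$ forces $N\,\mu(D_N)\to 0$, since a decreasing summable sequence $d_m$ obeys $N d_N\le 2\sum_{m\ge \lceil N/2\rceil}d_m\to 0$. Thus $\int_A n_A\,d\mu=\sum_{m\ge 0}\mu(D_m)=1$. The main obstacle is therefore not any single deep step but the bookkeeping that makes the scheme robust: phrasing every set operation through preimages to accommodate non-invertible $f$, and controlling the tail term $N\,\mu(D_N)$.
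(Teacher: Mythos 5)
Your proof is correct and complete. Note first that the paper does not actually prove this statement: it is quoted as the classical result of Kac with a reference to \cite{K}, so there is no in-paper proof to match step by step. The closest analogues in the paper are (i) the tower decomposition used in the proof of Theorem~A, where $I$ is split into the level sets $I_n=\{n_I=n\}$ and one pushes forward, $I_{n,k}=f^k(I_n)$, using invertibility of $f$ to get $\mu(I_{n,k})=\mu(I_n)$ and a disjoint full-measure tower; and (ii) the remark on suspension semiflows at the end, where the author works with $I_n^*=(\Sigma\setminus I)\cap\{n_I=n\}$ and derives $\mu(I_n^*)=\mu(I_{n+1})+\mu(I_{n+1}^*)$, hence $1=\sum_k k\,\mu(I_k)=\int_I n_I\,d\mu$. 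Your argument is essentially this second, preimage-based route: your identity $f^{-1}(D_k)=A_{k+1}\sqcup D_{k+1}$ is exactly that recursion, and your Abel summation plus the tail estimate $N\mu(D_N)\to 0$ (which the paper glosses over) makes the telescoping rigorous. What your version buys over the forward-image tower is that it needs no invertibility; what it requires, and you correctly flag, is ergodicity to force $\mu\bigl(\bigcup_k D_k\bigr)=1$. The theorem as displayed omits the ergodicity hypothesis, but it is covered by the paper's standing assumption stated immediately before ("we shall fix an $f$-invariant ergodic probability measure"); without it the correct right-hand side would be $\mu\bigl(\bigcup_{k\ge 0}f^{-k}A\bigr)$ instead of $1$, exactly the quantity your set $B$ computes.
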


In the end of the sixties, Helmberg~\cite{Hel} studied recurrence for arbitrary sets satisfying 
a 'boundary condition' with respect to measurable and continuous flows. In general, if a set $A$ 
is such that the boundary $\partial A$ has positive $\bar\mu$-measure and admits a fractal structure 
then it was not clear how to define properly return times to $A$.
To overcome this difficulty Helmberg paper defined return times using (measurable) families of
exit and entrance regions: for any $s>0$ define the \emph{exit region} $A_s$ by
$
A_s = \{ z \in M : \exists 0\le \ell < r \le s \text{ so that } X_\ell(z) \in A \text{ and } X_r(z) \notin A\}
$ 
and analogousy the \emph{entrance region} $\tilde A_s$ by
$
\tilde A_s = \{ z \in M : \exists 0\le \ell < r \le s \text{ so that } X_\ell(z) \notin A \text{ and } X_r(z) \in A\}.
$ 
These are cleary decreasing families as $s$ tends to zero. In this case, the \emph{return time} is 
defined by $r_A(z)= \inf\{ s>0 : z\in \tilde A_s\}$ and it is proved to be almost everywhere well defined. 
More precisely,
\begin{theorem}\cite[Theorem~5]{Hel}\label{Hel}
Let $(X_t)_{t\ge 0}$ be a semiflow of measurable transformations on a probability space 
$(M, \bar\mu)$. If $A\subset M$ is a positive $\bar\mu$-measure set 
and $t>0$ is such that $\lim_{n\to\infty} \bar\mu (A_{\frac{t}{n}})=0$ then
$$
\lim_{s \to 0} \frac{1}{s} \int_{A_s} r_A (z) \, d\bar\mu
	= \bar\mu \bigg(  \bigcup_{r\ge 0} X_{-r} (A) \bigg) - \bar\mu (A).
$$
In particular, if $\bar\mu$ is ergodic then the right hand side is equal to $1-\bar\mu(A)$.
\end{theorem}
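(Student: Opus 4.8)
The plan is to prove the \emph{exact} limiting identity $\lim_{s\to0}\tfrac1s\int_{A_s} r_A\,d\bar\mu=\bar\mu(R)$, where $R:=\big(\bigcup_{r\ge 0}X_{-r}(A)\big)\setminus A$ is the \emph{excursion region} consisting of the points outside $A$ whose forward orbit eventually meets $A$, and then to read off the stated right-hand side together with its ergodic specialization. The starting point is a layer-cake representation of the return time. Since $s\mapsto\tilde A_s$ is increasing and $r_A(z)=\inf\{s>0:z\in\tilde A_s\}$, one has $\{r_A>u\}=M\setminus\tilde A_u$, so that by Tonelli
\[
\int_{A_s} r_A(z)\,d\bar\mu(z)=\int_0^\infty \bar\mu\big(A_s\setminus\tilde A_u\big)\,du .
\]
Everything is thereby reduced to understanding the measures $\bar\mu(A_s\setminus\tilde A_u)$ of the points that exit $A$ within the window $s$ but have not re-entered it within the window $u$.

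The heart of the argument is the thin-layer limit $s\to0$. Geometrically $A_s$ collapses onto the part of $\partial A$ through which orbits leave $A$, and the prefactor $\tfrac1s$ converts the flow-time thickness of this layer into a genuine transverse integral over the instants of exit. I would make this precise through a Fubini argument along orbits: using that $\bar\mu$ is $(X_t)$-invariant, the measure of the family of excursion segments emanating from the exit boundary equals the integral of the excursion length against the induced transverse measure, and this is exactly what $\tfrac1s\int_{A_s}r_A\,d\bar\mu$ computes in the limit. A useful book-keeping heuristic, which also pins down the value, is that the total ``recurrence budget'' of the flow splits into the time spent inside $A$ and the time spent on excursions outside it; the former accounts for $\bar\mu(A)$, while the integral of $r_A$ over the exit layer measures precisely the latter, namely $\bar\mu(R)$. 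This is the step where the hypothesis $\lim_{n\to\infty}\bar\mu(A_{t/n})=0$ is indispensable: it guarantees that the exit layer is genuinely thin, that the contribution of points exiting and re-entering $A$ within the same vanishing window is negligible, and that dominated convergence applies to the $u$-integral above.

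It then remains to identify $\bar\mu(R)$ with $\bar\mu\big(\bigcup_{r\ge0}X_{-r}(A)\big)-\bar\mu(A)$ and to treat the ergodic case. The former is immediate once one observes that $\bigcup_{r\ge0}X_{-r}(A)$ is, modulo a $\bar\mu$-null set, the disjoint union of $A$ and $R$; here again invariance of $\bar\mu$ is used to discard the $\bar\mu$-negligible set of points that meet $A$ in the future but never lie on a completed excursion. For the ergodic specialization, $\bigcup_{r\ge0}X_{-r}(A)$ is a forward-invariant set of positive measure, so ergodicity forces it to have full measure and the limit becomes $1-\bar\mu(A)$.

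The step I expect to be the main obstacle is making this thin-layer, Fubini-along-orbits limit rigorous for a merely \emph{measurable} semiflow: there is no smooth flow-box structure to lean on, $\partial A$ may carry a fractal geometry of positive $\bar\mu$-measure, and, crucially, $(X_t)$ need not be invertible, so ``the excursion through $w$'' cannot be traced backward pointwise. The clean way around this is to phrase the entire computation in terms of the measurable, future-oriented families $A_s,\tilde A_s$ and the invariance identity $\int \mathbf{1}_A\circ X_t\,d\bar\mu=\bar\mu(A)$, letting the hypothesis on $\bar\mu(A_{t/n})$ supply the regularity that a smooth structure would otherwise provide. As an independent check one can discretize: applying the classical Kac lemma (Theorem~\ref{leKac}) to the $\bar\mu$-preserving time-$\delta$ map $X_\delta$ and letting $\delta\to0$ reproduces both the unit total budget and the correcting term $-\bar\mu(A)$.
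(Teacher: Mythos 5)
First, a point of reference: the paper does not prove this statement. It is quoted verbatim from Helmberg \cite{Hel}, and the only hint given of the argument is that Helmberg proceeds by ``approximating return times for the flow by studying return times for individual time-$s$ maps $T_s$''. So your proposal must stand on its own, and as written it has a genuine gap at its central step. The preliminary reduction is fine: since $u\mapsto\tilde A_u$ is increasing and $r_A(z)=\inf\{u>0:z\in\tilde A_u\}$, one has $\{r_A>u\}=M\setminus\tilde A_u$ up to the level sets $\{r_A=u\}$, and Tonelli gives $\int_{A_s}r_A\,d\bar\mu=\int_0^\infty\bar\mu(A_s\setminus\tilde A_u)\,du$. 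But everything after that --- the evaluation of $\lim_{s\to0}\frac1s\,\bar\mu(A_s\setminus\tilde A_u)$, the identification of its $u$-integral with $\bar\mu(R)$, and the interchange of the limit with the $u$-integral --- is announced rather than carried out, and you say so yourself. For a merely measurable semiflow there is no ``transverse measure on the exit boundary'' to integrate the excursion length against; constructing one is essentially the Ambrose--Kakutani representation, so invoking it here is circular with respect to the very difficulty you flag. Nor do you show where, quantitatively, the hypothesis $\lim_{n}\bar\mu(A_{t/n})=0$ enters: ``guarantees the layer is genuinely thin'' is not an estimate, and without one the claimed dominated-convergence step has no dominating function.

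The device that actually closes this gap is the one you relegate to a final ``independent check'': apply the discrete Kac lemma in its general form $\int_A n_A\,d\mu=\mu\big(\bigcup_{k\ge0}f^{-k}A\big)$ to the $\bar\mu$-preserving map $f=X_s$, which gives an exact identity for every $s>0$ whose right-hand side already has the shape $\bar\mu\big(\bigcup_{r\ge0}X_{-r}(A)\big)$; one then compares $s$ times the discrete return time with $r_A$ and the discrete ``about to exit'' set with $A_s$, and the hypothesis $\bar\mu(A_{t/n})\to0$ is precisely what controls the error coming from orbits that exit and re-enter $A$ within a single step of length $s=t/n$. That discretization, not a flow-box Fubini, is what supplies the missing rigor, and it is Helmberg's route as described in the paper. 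The easy parts of your proposal --- that $\bigcup_{r\ge0}X_{-r}(A)$ is, mod $\bar\mu$, the disjoint union of $A$ and $R$, and that in the ergodic case this backward-invariant set of positive measure has full measure, giving $1-\bar\mu(A)$ --- are correct.
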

The limit in the previous theorem follows by the strategy used by Helmberg of approximating return times 
for the flow by studying return times for individual time-$s$ maps $T_s$ for small $s>0$.  While the result holds under big generality e.g. for all positive measure sets $A$ so that $\bar\mu (\partial A)=0$ it provides 
estimate on the limiting  mean return time function on the exit components $A_s$, that is, on points in the eminence 
of leaving the set $A$.

In our time-continuous setting of suspension flows and geometrical objects like cylinders or balls whose
boundary will have always always zero. We will compute the mean return time among 
points in the whole set $A$. Taking into account the normalization given by equation~\eqref{eq:normal} it is a natural question to understand in which sense the mean return time for suspension flows reflects its dependence on the integral of the roof function. More precisely, given a set $A$ if one makes 
a reparametrization of the suspension flow leaving the set $A$ invariant it is interesting to understand if the
mean return times for both flows coincide or, if not, if they can be related to each other.  
Moreover, we shall address some questions on positive measures sets on the cross section $\Sigma$, which
have zero $\bar\mu$-measure on the flow while are dynamically significant and contain non-trivial recurrence. 

Let us introduce some necessary notions.
Given a set $A\subset \Sigma_\tau$ define the \emph{escape time} $e_A(\cdot, \cdot) : \Sigma_\tau \to \mathbb R_0^+$
to be given by 
$$
e_A(x,t)=\inf\{ s>0 : X_s(x,t)\not\in A\}
$$
for any $(x,t)\in \Sigma_\tau$. The escape time $e_A(x,t)$ is clearly zero for any point $(x,t)$ in the 
complement of $A$.  
We define also the \emph{hitting time} to $A$ as the function $n_A : \Sigma_\tau \to \R_0^+$
given by 
\begin{equation}\label{def:entrance}
n_A(x,t)=\inf\{ s>e_A(x,t) : X_s(x,t)\in A\}.
\end{equation}
The \emph{return time function to $A$} consists of the restriction of $n_A(\cdot,\cdot)$  to the set $A$ and,
clearly, $n_A(x,t) \ge e_A(x,t)$ for every $(x,t)\in A$.
Alternatively,  we can define the return time 
$
\tilde n_A(x,t)=\inf\{ s>0 : X_{s+ e_A(x,t)} (x,t)\in A\},
$
in which case the functions verify $n_A=e_A + \tilde n_A$ and consequently to $ \int_A n_A \,d\bar\mu$ and
$ \int_A \tilde n_A \,d\bar\mu$ just differ by the average escaping time $ \int_A e_A \,d\bar\mu$.
For that reason we shall restrict to the study of the previously defined hitting time function. 
Our first result is a version of Kac's lemma for returns to cylinder sets.

\begin{maintheorem}\label{thm:Kac}
Let $f: \Sigma\to \Sigma$ be a measurable invertible map on a topological space $\Sigma$ preserving 
an ergodic probability measure $\mu$ and let $\tau: \Sigma \to \mathbb R_0^+$ be 
a $\mu$-integrable roof function. Denote
by $(X_t)_t$ the suspension semiflow over $(f,\mu,\tau)$ and let $\bar \mu$ be the corresponding 
$(X_t)_t$ invariant probability measure given by \eqref{eq:normal}. Given 
a positive $\bar\mu$-measure cylinder set $A= I \times [t_1,t_2] \subset \Sigma\times \mathbb R^+$ 
 in $\Sigma_\tau$ then 
\begin{align*}
\int_A   n_A(x,t) \; d\bar\mu_A
	 & =   \int_A e_A(x,t)   \, d\bar \mu_A + \big( 1- \bar\mu(A) \big) \frac{1}{\mu(I)} \, \int_\Sigma \tau \,d\mu \\
	 & =  \frac{m([t_1,t_2])}{2} + \frac{1}{\mu(I)} \int_\Sigma \bigg( \tau  - (t_2-t_1) \chi_I \bigg) \, d\mu
\end{align*}
In particular, if the set $A$ is kept unchanged and we change the roof function $\tau$ the mean 
return time $\int_A n_A(x,t) \; d\mu_A$ varies lineary with respect to the  mean of the roof function
$\int_\Sigma  \tau \, d\mu$.
\end{maintheorem}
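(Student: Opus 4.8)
The plan is to reduce the whole computation to the base dynamics by describing the escape and hitting times of the flow explicitly in terms of the first return time of $f$ to $I$ and the Birkhoff sums of the roof function $\tau$. Throughout, write $S_k\tau(x)=\sum_{j=0}^{k-1}\tau(f^j(x))$ and let $n_I(x)=\inf\{k\ge 1: f^k(x)\in I\}$ be the first return time of the base map to $I$, which is finite $\mu$-almost everywhere on $I$ by Poincar\'e recurrence since $\mu(I)>0$.

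First I would compute the escape time. For $(x,t)\in A$ we have $x\in I$ and $t_1\le t\le t_2\le\tau(x)$, and the vertical flow keeps the base point fixed while the height increases from $t$ up to $\tau(x)$; hence the orbit leaves $A$ exactly when the height first exceeds $t_2$, giving
$$
e_A(x,t)=t_2-t \qquad \text{for } \bar\mu\text{-a.e. } (x,t)\in A.
$$
Integrating with $\bar\mu=(\mu\times m)|_{\Sigma_\tau}/\int_\Sigma\tau\,d\mu$ and $\bar\mu(A)=\mu(I)(t_2-t_1)/\int_\Sigma\tau\,d\mu$, Fubini gives $\int_A e_A\,d\bar\mu_A=\tfrac{1}{t_2-t_1}\int_{t_1}^{t_2}(t_2-t)\,dt=\tfrac{m([t_1,t_2])}{2}$, which is the first summand of the second displayed expression.

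Next I would establish the key identity for the hitting time. After leaving $A$ at the point $(x,t_2)$, the orbit rises to the top of the fibre over $x$, passes through the fibres over $f(x),\dots,f^{n_I(x)-1}(x)$ — none of which meets $A$, since their base points lie outside $I$ — and re-enters $A$ at height $t_1$ over the base point $f^{n_I(x)}(x)\in I$. Adding the three contributions $(\tau(x)-t)$, $\sum_{j=1}^{n_I(x)-1}\tau(f^j(x))$ and $t_1$ yields
$$
n_A(x,t)=S_{n_I(x)}\tau(x)-t+t_1 \qquad \text{for } \bar\mu\text{-a.e. } (x,t)\in A.
$$
Integrating in the fibre variable over $[t_1,t_2]$ produces $(t_2-t_1)\,S_{n_I(x)}\tau(x)-\tfrac{(t_2-t_1)^2}{2}$, so that
$$
\int_A n_A\,d\bar\mu=\frac{1}{\int_\Sigma\tau\,d\mu}\Big[(t_2-t_1)\int_I S_{n_I}\tau\,d\mu-\mu(I)\frac{(t_2-t_1)^2}{2}\Big].
$$

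The crux is then to evaluate $\int_I S_{n_I}\tau\,d\mu$, and here I would invoke the induced (Kac--Abramov) formula $\int_I S_{n_I}\varphi\,d\mu=\int_\Sigma\varphi\,d\mu$, valid for every $\varphi\in L^1(\mu)$ because ergodicity makes $\bigcup_{k\ge 0}f^{-k}(I)$ a full-measure set; applying it with $\varphi=\tau$ gives $\int_I S_{n_I}\tau\,d\mu=\int_\Sigma\tau\,d\mu$ (taking $\varphi\equiv 1$ recovers Theorem~\ref{leKac}). Substituting this and dividing by $\bar\mu(A)$ collapses the bracket to $\tfrac{1}{\mu(I)}\int_\Sigma\tau\,d\mu-\tfrac{t_2-t_1}{2}$. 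Writing $\tfrac1{\mu(I)}\int_\Sigma\tau\,d\mu=\tfrac1{\mu(I)}\int_\Sigma(\tau-(t_2-t_1)\chi_I)\,d\mu+(t_2-t_1)$ and recalling $\int_A e_A\,d\bar\mu_A=\tfrac{m([t_1,t_2])}{2}$ yields the second equality; while isolating $(1-\bar\mu(A))\tfrac1{\mu(I)}\int_\Sigma\tau\,d\mu=\tfrac1{\mu(I)}\int_\Sigma\tau\,d\mu-(t_2-t_1)$ and adding back the escape integral yields the first. The linear dependence on $\int_\Sigma\tau\,d\mu$ is then immediate. I expect the main obstacle to be the rigorous justification of the hitting-time identity and of the induced formula on the exceptional boundary set (the fibre heights $t=t_1,t_2$ and the points with $\tau(x)=t_2$ or $n_I(x)=\infty$), all of which are $\bar\mu$-null and therefore do not affect the integrals.
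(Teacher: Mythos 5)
Your proposal is correct and follows essentially the same route as the paper: the identity $e_A(x,t)=t_2-t$, the hitting-time formula $n_A(x,t)=S_{n_I(x)}\tau(x)-t+t_1$, and the reduction to the induced identity $\int_I S_{n_I}\tau\,d\mu=\int_\Sigma\tau\,d\mu$. The only difference is that you invoke this last identity as the known Kac--Abramov formula, whereas the paper proves it directly via the Kakutani tower decomposition $\Sigma=\bigcup_n\bigcup_{k=0}^{n-1}f^k(I_n)$ using invertibility and ergodicity.
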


\begin{proof}
Let $n_I(\cdot)$ be the first return time function
to $I$ by $f$. Given $(x,t)\in\Sigma_\tau$ and $s\ge 0$ it follows from \eqref{eq:1}
that 
$
X_s(x,t)
	=(f^k(x), t+s-\sum_{j=0}^{k-1} \tau(f^j(x)) )
$
where $k=k(x,t,s)\ge 0$ is determined by
$
\sum_{j=0}^{k-1} \tau(f^j(x)) \leq t+s < \sum_{j=0}^{k} \tau(f^j(x)).
$
Observe that if $X_s(x,t)\in A$ for $s>e_A(x,t)$ and $k=k(x,t,s)$ is defined as above 
then necessarily $f^k(x)\in I$. Reciprocally, if $(x,t)\in\Sigma_\tau$ and $f^k(x)\in I$ 
then there exists $s<\sum_{j=0}^{k} \tau(f^j(x)) - t$ such that $X_s(x,t)\in A$.
Now, since we are dealing with a cylinder $A= I \times [t_1,t_2]$ then $e_A(x,t)= t_2-t$ 
and so
\begin{align}
n_A(x,t)  
		& = \big[\sum_{j=0}^{n_I(x)-1} \tau(f^j(x)) - t \big]+t_1 \nonumber \\
		& = \sum_{j=0}^{n_I(x)-1} \tau(f^j(x)) + e_A(x,t) - m([t_1,t_2]) \label{eq:exit}
\end{align}
for any $(x,t)\in A$.
Given $n\ge 1$ consider the sets $I_n=\{x\in I : n_I(x)=n\}$ and it is clear that 
$I=\cup I_n$ modulo a zero measure set with respect to $\mu$. For $0\le k \le n-1$ set also 
$I_{n,k} = f^{k}(I_n)$. 
Since all points in $I_{n,k}$ have $n-k$ as first hitting time to $A$ and $f$ is invertible then 
$f^{-k}(I_{n,k})=I_n$ and, consequently, by $f$-invariance of the measure it holds $\mu(I_{n,k})=\mu(I_n)$. 
Since $\mu$-almost every $x$ will  eventually visit the set $I\subset \Sigma$, $\mu$ is ergodic 
and $I_{n,k}$ is family of disjoint sets then $\mu(\cup_n \cup_{0\le k \le n-1} I_{n,k})=1$.
Hence, 
\begin{align}
\int_\Sigma \tau \; d\mu
	& = \sum_{n\ge 1} \sum_{k=0}^{n-1} \int_{I_{n,k}} \, \tau \; d\mu
	 = \sum_{n\ge 1} \sum_{k=0}^{n-1} \int_{I_{n}} \, \tau\circ f^k \; d\mu \nonumber \\
	& = \sum_{n\ge 1}  \int_{I_{n}} \, \sum_{k=0}^{n-1} \tau\circ f^k \; d\mu
	= \int_I \sum_{k=0}^{n_I(x)-1} \tau\circ f^k \; d\mu. \label{eq:integ2}
\end{align}
In consequence using equations \eqref{eq:exit} and \eqref{eq:integ2} together it follows that
\begin{align}
\int_A   n_A(x,t) \; d\bar\mu
	& =  \frac{1}{\int_\Sigma \tau\,d\mu} \; \int_{t_1}^{t_2} \int_I  \Big[  \sum_{j=0}^{n_I(x)-1} \tau \circ f^j 
		 \Big] 
				\; d\mu \, dt   \nonumber \\
	& + \int_A e_A(x,t)   \, d\bar \mu  - m([t_1,t_2]) \, \bar\mu(A) \nonumber 
\end{align}
or, in other words,
\begin{equation}\label{eq:Stat1}
\int_A   n_A(x,t) \; d\bar\mu
	 =   \int_A e_A(x,t)   \, d\bar \mu + m([t_1,t_2]) \, \big( 1- \bar\mu(A) \big).
\end{equation}
If $\bar\mu_A$ denotes as before the normalized probability measure $\frac{\bar\mu\mid_A(\cdot)}{\bar\mu(A)}$ the later becomes
\begin{equation}\label{eq:Stat2}
\int_A   n_A(x,t) \; d\bar\mu_A
	 =   \int_A e_A(x,t)   \, d\bar \mu_A + \big( 1- \bar\mu(A) \big) \frac{1}{\mu(I)} \, \int_\Sigma \tau \,d\mu
\end{equation}
which proves the first equality in the theorem. A simple integral computation shows that
$\int_{t_1}^{t_2} (t_2 -t) \,dt = m([t_1,t_2])^2 / 2$ leading to
$
\int_A e_A(x,t)  \,d\bar\mu
	= \bar\mu(A) \, \frac{m([t_1,t_2])}{2}.
$ 
So $\int_A e_A(x,t)  \,d\bar\mu_A=\frac{m([t_1,t_2])}{2}$ and replacing
$\bar\mu(A)= \frac{\mu(I)}{\int \tau\, d\mu} \, m([t_1,t_2])$ in equation~\eqref{eq:Stat2} we conclude 
\begin{align*}
\int_A   n_A(x,t) \; d\bar\mu_A 
	& =   \frac{m([t_1,t_2])}{2} + \frac{1}{\mu(I)} \int_\Sigma \bigg( \tau  - (t_2-t_1) \chi_I \bigg) \, d\mu
\end{align*}
where $\chi_I$ denotes the indicator function of the set $I \subset \Sigma$. This proves the second 
equality in the theorem. In particular, it follows from the previous expression that if the set $A$ is kept unchanged the mean return time varies linearly with $\int_\Sigma \tau \;d\mu$. This finishes the proof of the 
theorem.
\end{proof}

Some comments are in order. The first one concerns the ergodicity assumption.
It is a simple consequence of Birkhoff's ergodic theorem and the integrability of the roof function
that $\bar \mu$ is ergodic for the flow if and only if $\mu$ is ergodic for $f$. If ergodicity fails it follows from
the ergodic decomposition theorem that for $\mu$ almost every $x\in \Sigma$ there are ergodic
probability measures $\mu_{x}$ on $\Sigma$ so that 
$\mu = \int \mu_{x} \; d\mu$. In such case $\bar \mu = \int \bar \mu_{x} d\bar \mu$ is the ergodic 
decompositon for $\bar \mu$, where $\bar\mu_x$ are the almost everywhere 
defined ergodic measures $\bar \mu_x = (\mu_{x} \times m) / \int \tau \, d\mu_{x}$. Using this we recover analogous
expression for the mean return time as in Theorem~\ref{thm:Kac} without the ergodicity assumption.

A second remark is related with the heigh of the cylinders $A$. Indeed, the role of the constants
$t_1$ and $t_2$ was not important to guarantee recurrence to the set $A$ since in our setting
$\Sigma$ is a global cross-section and recurrence is obtained simply by assuming the projection 
$I$ has positive $\mu$-measure. Hence, as a consequence of our strategy we deduce the following 
recurrence estimates for subsets of the cross-section.

\begin{maincorollary}
Let $f: \Sigma\to \Sigma$ be a measurable invertible map on a topological space $\Sigma$, $\mu$
be an $f$-invariant ergodic probability measure and let $\tau \in L^1(\mu)$ be a roof function, and let 
$(X_t)_t$ be the suspension semiflow over preserving $\bar \mu$.
Then, for any positive $\mu$-measure set $A\subset \Sigma$,
\begin{align*}
\int_A   n_A(x,0) \; d\mu_A 
	& = \frac{1}{\mu(A)} \int_\Sigma  \tau \, d\mu
\end{align*}
where $\mu_A$ of the normalization of the measure $\mu\mid_A$ in the cross-section $\Sigma$.
In particular,  if $f_A : A \to A$ denotes the first return time map then 
$$
\int_A   n_A(x,0) \; d\mu_A 
	= \frac{h_{\mu_A}(f_A)}{h_{\bar\mu}(X_1)}
$$
is the entropy of the first return time map quotiented by the entropy of the flow. 
\end{maincorollary}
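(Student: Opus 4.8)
The plan is to deduce the Corollary directly from Theorem~\ref{thm:Kac} by taking the degenerate limit in which the cylinder collapses onto the cross-section. Concretely, for the first identity I would apply the theorem to cylinders $A_\epsilon = A \times [0,\epsilon]$ with base $I = A \subset \Sigma$ and let $\epsilon \to 0^+$. From the second displayed formula in Theorem~\ref{thm:Kac}, with $t_1 = 0$, $t_2 = \epsilon$, and $\mu(I) = \mu(A)$, the mean return time for $A_\epsilon$ equals $\tfrac{\epsilon}{2} + \tfrac{1}{\mu(A)}\int_\Sigma(\tau - \epsilon\,\chi_A)\,d\mu$. The escape-time contribution $\tfrac{m([t_1,t_2])}{2} = \tfrac{\epsilon}{2}$ vanishes in the limit, as does the $\epsilon\,\chi_A$ correction inside the integral, leaving exactly $\tfrac{1}{\mu(A)}\int_\Sigma \tau\,d\mu$. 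The key point, which is really the content of the second remark preceding the Corollary, is that recurrence to $A \subset \Sigma$ is governed entirely by the positivity of $\mu(A)$ and is insensitive to the height, so the limiting hitting time $n_A(x,0)$ is the honest first return time to the section. I would make this rigorous by observing that for points $(x,0)$ with $x \in A$ the escape time $e_{A_\epsilon}(x,0) = \epsilon$ and, by the formula \eqref{eq:exit} established in the proof of the theorem, $n_{A_\epsilon}(x,0) = \sum_{j=0}^{n_I(x)-1}\tau(f^j(x)) + \epsilon - \epsilon$; passing to $\epsilon \to 0$ gives $n_A(x,0) = \sum_{j=0}^{n_A(x)-1}\tau(f^j(x))$, so integrating over $A$ with respect to $\mu_A$ and invoking the Kac-type computation \eqref{eq:integ2} yields the first identity.

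For the second identity I would rewrite the right-hand side $\tfrac{1}{\mu(A)}\int_\Sigma \tau\,d\mu$ as a ratio of two entropies. The numerator I would identify via Abramov's formula: the entropy of the suspension flow satisfies $h_{\bar\mu}(X_1) = h_\mu(f)/\int_\Sigma \tau\,d\mu$, so that $\int_\Sigma \tau\,d\mu = h_\mu(f)/h_{\bar\mu}(X_1)$. The denominator $\mu(A)$ I would absorb using Abramov's formula in its induced-map form (the Abramov--Kac relation): for the first return map $f_A : A \to A$ preserving $\mu_A$, one has $h_{\mu_A}(f_A) = h_\mu(f)/\mu(A)$, which is itself the continuous-time-free avatar of Kac's lemma. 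Substituting both relations, the factor $h_\mu(f)$ cancels and $\tfrac{1}{\mu(A)}\int_\Sigma \tau\,d\mu = \tfrac{1}{\mu(A)}\cdot\tfrac{h_\mu(f)}{h_{\bar\mu}(X_1)} = \tfrac{h_{\mu_A}(f_A)}{h_{\bar\mu}(X_1)}$, which is the claimed expression.

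I expect the main obstacle to be the justification of the limiting argument rather than the algebra: one must confirm that $\int_{A_\epsilon} n_{A_\epsilon}\,d\bar\mu_{A_\epsilon} \to \int_A n_A(\cdot,0)\,d\mu_A$, i.e. that the flow-normalized integral over the thin cylinder converges to the section-normalized integral. Since $\bar\mu_{A_\epsilon}$ is essentially the product of $\mu_A$ on the base with normalized Lebesgue measure on $[0,\epsilon]$, and the integrand $n_{A_\epsilon}(x,t)$ is, up to the vanishing escape correction, independent of $t$, this reduces to a Fubini computation together with dominated convergence, using $\tau \in L^1(\mu)$ to control $\sum_{j=0}^{n_A(x)-1}\tau(f^j(x))$ — whose integrability is precisely \eqref{eq:integ2}. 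A secondary point requiring care is that the two entropy identities I invoke presuppose $h_\mu(f) < \infty$; if $h_\mu(f) = \infty$ the ratio must be read in the extended sense, but the first (measure-theoretic) identity remains valid unconditionally and is the substantive statement.
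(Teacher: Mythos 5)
Your proposal is correct and rests on exactly the same ingredients as the paper's proof: the identity $n_A(x,0)=\sum_{j=0}^{n_A(x)-1}\tau(f^j(x))$ coming from \eqref{eq:exit}, the Kac-type computation \eqref{eq:integ2} with $I=A$, and Abramov's two formulas $h_{\mu_A}(f_A)=h_\mu(f)/\mu(A)$ and $h_{\bar\mu}(X_1)=h_\mu(f)/\int_\Sigma\tau\,d\mu$. The only difference is presentational: the paper works directly on the slice $A\times\{0\}$, where $e_A\equiv 0$ and the formula for $n_A(x,0)$ is immediate, so the thin-cylinder limit $\epsilon\to 0$ and the dominated-convergence justification you flag as the main obstacle are not actually needed.
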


\begin{proof}
For simplicity reasons we shall denote also by $A$ the set $A\times\{0\} \subset \Sigma_\tau$.
it is clear that $e_A(\cdot, \cdot) \equiv 0$. Hence
It follows e.g. from equation~\eqref{eq:exit} and ~\eqref{eq:integ2} (taking $I=A$) that 
$
n_A(x,0)  = \sum_{j=0}^{n_A(x)-1} \tau(f^j(x))
$
and we deduce that
$
\int_A n_A(x,0) \, d\mu = \int_\Sigma \tau \, d\mu,
$
from which expression the result immediately follows. 

In addition, on the one hand  Abramov~\cite{Ab1} proved that the entropy $h_{\mu_A}(f_A)$ 
of the first return time map $f_A$ with respect to $\mu_A$ satisfies 
$
h_\mu(f)=h_{\mu_A}(f_A) \, \mu(A).
$
On the other hand, the formula established for the time-$t$ map of the flow with relation
with the base map obtained by Abramov~\cite{Ab2} 
$$
h_{\bar\mu} (X_t) = \frac{|t| \, h_\mu(f)}{\int_\Sigma \tau \, d\mu}  
$$
and so
$$
\int_A   n_A(x,0) \; d\mu_A  
	= \frac1{\mu(A)} \int_\Sigma \tau \, d\mu  
	= \frac{h_{\mu_A}(f_A)}{h_\mu(f)} \, \frac{h_\mu(f)}{h_{\bar\mu}(X_1)}
	= \frac{h_{\mu_A}(f_A)}{h_{\bar\mu}(X_1)}.
$$
This finishes the proof of the corollary.
\end{proof}

\begin{remark}
Let us mention that the assumption of Theorem~\ref{Hel} 
is satisfied in the case of suspension flows and cylinder sets.
In fact, if $A=I \times [t_1,t_2] \subset \Sigma_\tau$ and $s>0$ is small then 
$A_s=I \times [t_2-s, t_2]$ and $\tilde A_s=I \times \{t_1\}$. 
In particular $\bar\mu(A_s) \to 0$ as $s$ tends to zero. Since $A_s\subset A$ then the return time $r_A$
defined in \cite{Hel} coincides with the definition given in equation~\ref{def:entrance} and it follows that
$$
\lim_{s \to 0} \frac{1}{s} \int_{A_s} r_A (z) \, d\bar\mu
	= 1 - \bar\mu (A).
$$
\end{remark}

Now, let us observe that the case of flows cannot be obtained directly
from the discrete time setting since it reflects the escaping times and the width of the cylinders 
as shown by the following immediate consequence of the theorem, in which we also use the mean 
return times to compute entropy of a measure in the cross section.

\begin{corollary}
Let $((X_t)_t,\bar\mu)$ be the suspension semiflow over $(f,\mu,\tau)$ as in Theorem~\ref{thm:Kac} and
assume that the roof function $\tau$ is constant.
If $A= I \times [0,\tau] \subset \Sigma\times \mathbb R$ is a full cylinder set in $\Sigma_\tau$ 
with positive $\bar\mu$-measure then 
$$
\int_A n_A(x,t) \; d\bar\mu_A 
	 =  \frac{1}{\mu(I)} \int_\Sigma \bigg( \tau  - \frac\tau{2} \chi_I \bigg) \, d\mu
	= \frac{\tau}{\mu(I)} \left( 1   - \frac{\mu(I)}{2}  \right). 
$$
\end{corollary}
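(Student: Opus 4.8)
The plan is to obtain the final Corollary as a direct specialization of Theorem~\ref{thm:Kac}. The hypotheses match exactly: we take the cylinder $A = I \times [t_1,t_2]$ with $t_1 = 0$ and $t_2 = \tau$, where $\tau$ is now a \emph{constant} roof function. The whole argument is therefore a substitution into the second displayed equality of Theorem~\ref{thm:Kac}, so there is no genuinely new content and no real obstacle; the only care needed is bookkeeping of the integrals once the roof function is constant.

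First I would write down the second formula of Theorem~\ref{thm:Kac},
\begin{equation*}
\int_A n_A(x,t)\;d\bar\mu_A
 = \frac{m([t_1,t_2])}{2} + \frac{1}{\mu(I)}\int_\Sigma\Big(\tau - (t_2-t_1)\chi_I\Big)\,d\mu,
\end{equation*}
and then set $t_1=0$, $t_2=\tau$, so that $m([t_1,t_2]) = \tau$ and $t_2-t_1=\tau$. The first summand becomes $\tau/2$, which I then fold into the integral: since $\tau$ is constant it passes outside the integral and combines with the $-\tau\chi_I$ term. Concretely I would observe that $\frac{\tau}{2} = \frac{1}{\mu(I)}\int_\Sigma \frac{\tau}{2}\chi_I\,d\mu$, because $\int_\Sigma \chi_I\,d\mu = \mu(I)$ and $\tau$ is a constant; adding this to $\frac{1}{\mu(I)}\int_\Sigma(\tau - \tau\chi_I)\,d\mu$ collapses the two $\chi_I$ contributions into a single $-\frac{\tau}{2}\chi_I$, yielding the middle expression $\frac{1}{\mu(I)}\int_\Sigma\big(\tau - \frac{\tau}{2}\chi_I\big)\,d\mu$.

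Finally I would evaluate that integral explicitly. Using again that $\tau$ is constant and $\mu$ is a probability measure, $\int_\Sigma \tau\,d\mu = \tau$ and $\int_\Sigma \frac{\tau}{2}\chi_I\,d\mu = \frac{\tau}{2}\mu(I)$, so the expression reduces to $\frac{1}{\mu(I)}\big(\tau - \frac{\tau}{2}\mu(I)\big) = \frac{\tau}{\mu(I)}\big(1 - \frac{\mu(I)}{2}\big)$, which is exactly the right-hand side claimed. The only point deserving a line of comment is that a ``full cylinder'' $I\times[0,\tau]$ is a legitimate cylinder set to which Theorem~\ref{thm:Kac} applies, so that the escape-time term $\int_A e_A\,d\bar\mu_A = m([t_1,t_2])/2$ is indeed $\tau/2$; everything else is arithmetic.
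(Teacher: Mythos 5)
Your proof is correct and is exactly the paper's (implicit) argument: the corollary is presented there as an ``immediate consequence'' of Theorem~\ref{thm:Kac}, obtained precisely by substituting $t_1=0$, $t_2=\tau$ into its second displayed formula and simplifying, and your arithmetic (folding the $\tau/2$ term into the integral via $\tau/2=\frac{1}{\mu(I)}\int_\Sigma\frac{\tau}{2}\chi_I\,d\mu$) checks out. The one caveat you flag but do not resolve --- that the \emph{full} cylinder $I\times[0,\tau]$ touches the roof, so the identification $(x,\tau)\sim(f(x),0)$ makes the escape-time computation $e_A(x,t)=t_2-t$ from Theorem~\ref{thm:Kac} questionable on $\{x\in I: f(x)\in I\}$ --- is equally unaddressed in the paper, so it is inherited rather than introduced by you.
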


Despite the fact that cylinder sets arise naturally for flows (e.g. from the tubular neighborhood theorem)
sometimes we are interested in the return times to geometric objects as balls which can be used to
study dimension of measures among other relevant dynamical quantities. 
For that reason, we will now extend our result for a more general class of sets that include balls.
Let $\cA$ be the family of closed sets whose boundary are graphs of functions over $\Sigma$. More
precisely, $A\in \cA$ if and only if there are measurable functions $h_1, h_2: \pi_1(A) \to \mathbb R_0^+$ such that $A=\{(x,t)\in \Sigma_\tau : h_1(x) \le t \le h_2(x) \}$, where $\pi_1: \Sigma_\tau \to\Sigma$ 
denotes the natural projection on $\Sigma$. For simplicity we shall assume $h_1(x)\le h_2(x) \le \tau(x)$ for 
every $x\in \Sigma$ ands denote such sets by $A_{h_1,h_2}$.
Just as a remark, for the purpose of recurrence properties to these sets the return times for the flow 
coincide with the ones if one had considered sets of the form 
$A=\{(x,t)\in \Sigma_\tau : h_1(x) < t < h_2(x) \}$.
We can now state our next result.

\begin{maintheorem}\label{thm:Kac2}
Let $(X_t)_t$ be a suspension semiflow associated to $(f,\Sigma,\tau)$ and let $\bar \mu$ be the ergodic
$(X_t)_t$-invariant probability measure associated to the $f$-invariant ergodic probability measure 
$\mu$. Given $A=A_{h_1,h_2}\in \mathcal A$ with $\bar \mu$-positive measure then
\begin{align*}
\int_A n_A(x,t) \; d\bar\mu_A
	& =  \int_{A} e_A (x,t) \; d\bar\mu_A 
	+\frac{ \int_I   h(x) \, \tau^{n_I}(x) \; d\mu }{ \int_I   h(x)  \; d\mu } \\
	 & + \frac{\int_I h(x)  [h_1(f^{n_I(x)}(x)) - h_2(x)] \; d\mu }{\int_I   h(x)  \; d\mu }
\end{align*}
where $h(x)=h_2(x)-h_1(x)$,  $ \tau^{n_I}(x):=\sum_{k=0}^{n_I(x)-1} \tau\circ f^k(x)$ and 
also 
$$
\int_{A} e_A (x,t) \; d\bar\mu_A 
	= \frac{ \int_I   h(x) \, \big [ \frac{h_2(x)-h_1(x)}{2} \big] \; d\mu }{ \int_I   h(x)  \; d\mu }.
$$ 
\end{maintheorem}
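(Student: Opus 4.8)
The plan is to follow the scheme of the proof of Theorem~\ref{thm:Kac}, replacing the constant heights $t_1,t_2$ of the cylinder by the graphs $h_1,h_2$ and tracking how the fiber coordinate enters the bookkeeping. Everything rests on an explicit pointwise formula for $n_A(x,t)$ valid for $\bar\mu$-almost every $(x,t)\in A$, together with a disintegration of $\bar\mu_A$ along the vertical fibers. Throughout, $n_I(x)=\inf\{k\ge 1:f^k(x)\in I\}$ denotes the first return time of $f$ to $I=\pi_1(A)$, which is $\mu$-almost everywhere finite on $I$ by Poincar\'e recurrence, since $\bar\mu(A)>0$ forces $\int_I h\,d\mu>0$ and hence $\mu(I)>0$.

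First I would read off the geometry of a return orbit. Fix $(x,t)\in A$, so that $h_1(x)\le t\le h_2(x)$. Flowing upward, the orbit leaves $A$ at the top of the fiber, whence $e_A(x,t)=h_2(x)-t$. The decisive observation is that $A$ projects onto $I$; consequently, while the base orbit $f^k(x)$ lies outside $I$ the flow orbit cannot meet $A$ (this is exactly where the standing assumption $h_2\le\tau$ is used, so that each fiber of $A$ is a single interval inside $[0,\tau(x)]$). Therefore the first return to $A$ after escaping happens precisely at the first return of the base orbit to $I$, over the fiber above $f^{n_I(x)}(x)$, and the orbit re-enters $A$ from below upon reaching height $h_1(f^{n_I(x)}(x))$. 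Summing the vertical displacements -- the remaining time $\tau(x)-t$ on the fiber of $x$, the full fibers over $f(x),\dots,f^{n_I(x)-1}(x)$, and the final $h_1(f^{n_I(x)}(x))$ -- yields
\begin{align*}
n_A(x,t) &= \tau^{n_I}(x) - t + h_1(f^{n_I(x)}(x)) \\
&= e_A(x,t) + \tau^{n_I}(x) + \big[\, h_1(f^{n_I(x)}(x)) - h_2(x) \,\big],
\end{align*}
which specializes to \eqref{eq:exit} when $h_1\equiv t_1$ and $h_2\equiv t_2$ are constant.

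Second I would record the fiberwise disintegration of $\bar\mu_A$. Since $\bar\mu=(\mu\times m)|_{\Sigma_\tau}/\int\tau\,d\mu$ and $A$ has fibers $[h_1(x),h_2(x)]$, integrating out the vertical coordinate gives, for any function $g$ depending only on $x$,
\begin{equation*}
\int_A g(x)\,d\bar\mu_A = \frac{\int_I g(x)\,h(x)\,d\mu}{\int_I h(x)\,d\mu},\qquad h=h_2-h_1,
\end{equation*}
because $\int_A g\,d\bar\mu=\frac{1}{\int\tau\,d\mu}\int_I g(x)h(x)\,d\mu$ and $\bar\mu(A)=\frac{1}{\int\tau\,d\mu}\int_I h(x)\,d\mu$. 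Applying this to the two $x$-dependent summands $\tau^{n_I}(x)$ and $h_1(f^{n_I(x)}(x))-h_2(x)$ in the formula above produces directly the last two terms of the statement. Note that, unlike in Theorem~\ref{thm:Kac}, no telescoping identity such as \eqref{eq:integ2} (and hence no invertibility of $f$) is needed: the function $\tau^{n_I}$ now appears already weighted by $h$, so it is carried along unchanged rather than being rewritten as an integral of $\tau$.

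Finally I would compute the escape-time term by a one-dimensional integral on each fiber. Since $e_A(x,t)=h_2(x)-t$, one has $\int_{h_1(x)}^{h_2(x)}(h_2(x)-t)\,dt = h(x)^2/2$, so by the disintegration formula
\begin{equation*}
\int_A e_A(x,t)\,d\bar\mu_A = \frac{\int_I h(x)\cdot\frac{h(x)}{2}\,d\mu}{\int_I h(x)\,d\mu} = \frac{\int_I h(x)\,\big[\tfrac{h_2(x)-h_1(x)}{2}\big]\,d\mu}{\int_I h(x)\,d\mu},
\end{equation*}
as claimed. Adding the three contributions then gives the full formula. The only genuinely delicate step is the first one, namely the justification that between escaping $A$ and the first base return to $I$ the orbit never re-enters $A$; once this is settled, the remainder is the product-measure computation already present in Theorem~\ref{thm:Kac}, now carried out with the $x$-dependent weight $h(x)$ in place of the constant $m([t_1,t_2])$.
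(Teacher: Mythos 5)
Your proof is correct and follows essentially the same route as the paper's: the same pointwise identity $n_A(x,t)=e_A(x,t)+\tau^{n_I}(x)+\big[h_1(f^{n_I(x)}(x))-h_2(x)\big]$ followed by integration against the product measure, with the fiberwise weight $h(x)$ replacing the constant width of Theorem~\ref{thm:Kac}. Your side observation that neither the telescoping identity \eqref{eq:integ2} nor invertibility of $f$ is actually needed here is accurate; the paper's proof mentions the decomposition $I=\bigcup_n I_n$ and the sets $f^k(I_n)$ but does not in fact use them to reach the stated formula.
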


\begin{proof}
Set $A=A_{h_1,h_2}$ and $I=\pi_1(A)$ for simplicity. Given $(x,t)\in A$ then clearly the escape time is
given by $e_A(x,t)=h_2(x)-t$. Moreover, if there exists  $s\ge 0$ so that
$
X_s(x,t)
	=\big(f^k(x), t+s-\sum_{j=0}^{k-1} \tau(f^j(x)) \big)
	\in A 
$
then $f^k(x)\in I$ and $k=k(x,t,s)\ge 0$ given as after \eqref{eq:1}.
Therefore, similarly to the proof of Theorem~\ref{thm:Kac}, for $\bar\mu$-almost every $(x,t)\in A$ we get
\begin{align*}
n_A(x,t)  
		& = \sum_{j=0}^{n_I(x)-1} \tau(f^j(x)) + e_A(x,t) - h_2(x) +h_1(f^{n_I(x)}(x)), \label{eq:exit2}
\end{align*}
we write $I=\cup I_n$ $(\text{mod} \; \mu)$ and
notice, by ergodicity, the family  $I_{n,k}=f^k(I_n)$ pairwise of disjoint sets verifies 
$\mu(\cup_n \cup_{0\le k \le n-1} I_{n,k})=1$. Hence, a simple computation shows that 
$$
\bar\mu(A)=\frac{1}{\int \tau \,d\mu} \int_I h(x)\, d\mu
$$ 
and also
\begin{align*}
\int_A   n_A(x,t) \; d\bar\mu
	& = \frac{1}{\int \tau \,d\mu} \; \int_{I}   h(x) \, \tau^{n_I}(x) 
				\; d\mu 
	 + \frac{1}{\int \tau \,d\mu} \; \int_{I} \int_{h_1(x)}^{h_2(x)} [  h_2(x)-t ]\; dt\, d\mu \\
	  &  + \frac{1}{\int \tau \,d\mu} 
	  \; \int_{I} \int_{h_1(x)}^{h_2(x)}  [h_1(f^{n_I(x)}(x)) - h_2(x)] \; \; dt\, d\mu 
\end{align*}
and the result follows by simple computations.
\end{proof}

\section{Final remarks}

\subsubsection*{'Almost' cylinders}

The first remark is that computations are above clearly simpler in the case where $A_{h_1,h_2}$ has 'parallel sides' meaning $h_2(x)=h_1(x)+c$ for some $c>0$.  In this case, using again the notation $I=\pi_1(A)$ and noticing $h(x)=c$ for every $x\in I$ it follows that $\bar\mu(A)= c \, \mu(I) / \int \tau \,d\mu$, that 
$
\int_A   e_A(x,t) \; d\bar \mu_A
		= \frac{c}{2}  
$
and, using that the first return time map $f^{n_I}: I \to I$ preserves the (ergodic) normalized probability 
measure $\mu_I$
\begin{align*}
\int_A n_A(x,t) \; d\bar\mu_A
	& =  \frac{c}2
	+ \frac{1}{ \mu(I) } \int_{\Sigma}   \tau \; d\mu
	+ \frac{ 1 }{ \mu(I) } 
	 \int_I [h_1(f^{n_I(x)}(x)) - h_1(x) -c ] \; d\mu \\
	& =  \frac{c}2
	+ \frac{1}{ \mu(I) } \int_{\Sigma}   (\tau -c \chi_I) \; d\mu
	+ \int_I [h_1(f^{n_I(x)}(x)) - h_1(x)] \; d\mu_I \\
	& =  \frac{c}2
	+ \frac{1}{ \mu(I) } \int_{\Sigma}   (\tau -c \chi_I) \; d\mu
\end{align*}

\vspace{.2cm}
\subsubsection*{Suspension semiflows}

We should mention that our results hold for suspension semiflows $(X_t)_{t\ge 0}$ associated to 
the suspension for non-invertible maps $f :\Sigma \to \Sigma$. Given an ergodic measure $\mu$, for $I\subset \Sigma$ we 
can decompose
$$
\Sigma = \bigcup_{n\ge 1} [ I_n \cup I_n^* ] \quad  (\mu_\Sigma \!\!\!\!\! \mod 0)
$$
where $I_n=I \cap \{ n_I(\cdot) = n\}$ and $I_n^*= (\Sigma \setminus I)  \cap \{ n_I(\cdot) = n\}$, and all
elements in the previous union are pairwise disjoint. Moreover, for every $n\ge 1$ we get 
$\mu (I_n^*) = \mu (f^{-1}(I_n^*)) = \mu(I_{n+1})+\mu (I_{n+1}^*)$
and consequently $\mu (I_n^*)=\sum_{k\ge n+1} \mu (I_k)$
and so $1=\mu (\Sigma)= \sum_{k\ge 1} k \mu (I_k)=\int n_I(\cdot) \, d\mu$.
Taking $I_{n,j}=f^j(I_n)$ for $0\le j \le n-1$, since $I_{n,j} \subset I_{n-j}^*$,
it follows that
\begin{align*}
\mu  \big( \bigcup_{n \ge 1} \bigcup_{j=0}^{n-1}  I_{n,j} \big)
	& = \mu  \big( \bigcup_{n \ge 1} \bigcup_{j \ge 0}  I_{n+j,j} \big) \\
	& = \sum_{n\ge 1} 
		\big[ \mu  \big(  I_{n,0} \big) 
		+  \mu  \big(  \bigcup_{j \ge 0}  I_{n+j,j} \big) \big] \\
	& = \sum_{n\ge 1} \big[ \mu (  I_{n} ) + \mu (  I^*_{n} ) \big] =1.
\end{align*}
Thus, as in equation~\eqref{eq:integ2} and obtain
$
\int_\Sigma \tau \; d\mu
	= \int_I \sum_{k=0}^{n_I(x)-1} \tau\circ f^k \; d\mu.
$
and the same computations as in the proof of Theorems~\ref{thm:Kac} and ~\ref{thm:Kac2} 
follow straightforward.

\vspace{.3cm}
\subsection*{Acknowledgements} This work was supported by a CNPq-Brazil postdoctoral fellowship at
University of Porto. The author is grateful to 
M. Bessa, F. Rodrigues and J. Rousseau for some comments on the manuscript. The 
author is deeply grateful to J.-R. Chazottes for his comments and for providing the reference~\cite{Hel}. 
The author is also grateful to the organizers of the event ``Probability in Dynamics at UFRJ" where part 
of this work was developed. 



\begin{thebibliography}{777}

\bibitem{Ab1} L. M. Abramov.
\newblock The entropy of a derived automorphism.  
\newblock  \emph{Dokl. Akad. Nauk SSSR} 128,  647--650, 1959. 

\bibitem{Ab2} L. M. Abramov.
\newblock On the entropy of a flow.
\newblock  \emph{Dokl. Akad. Nauk SSSR}, 128, 873--875,1959. 

\bibitem{Am} W. Ambrose.
\newblock Representation of ergodic flows.
\newblock  \emph{Ann. of Math.} (2) 42, 723--739, 1941. 

\bibitem{AK} W. Ambrose and S. Kakutani.
\newblock Structure and continuity of measurable flows.
\newblock  \emph{Duke Math. J.}, 9: 25--42, 1942.

\bibitem{AP}
V.~Ara{\'u}jo, M.~J. Pacifico.
\newblock {\em Three-dimensional flows}, volume~53 of {\em Ergebnisse der
  Mathematik und ihrer Grenzgebiete. 3. Folge. A Series of Modern Surveys in
  Mathematics [Results in Mathematics and Related Areas. 3rd Series. A Series
  of Modern Surveys in Mathematics]}.
\newblock Springer, Heidelberg, 2010.

\bibitem{Bo73} R. Bowen, Symbolic dynamics for hyperbolic flows,
\emph{Amer. Journal of Math.}, Vol. 95, No. 2 (1973), pp. 429-460

\bibitem{CR} J.-R. Chazottes and R. Leplaideur
\newblock Fluctuations of the nth return time for {A}xiom-{A} diffeomorphisms.
\newblock \emph{Discrete Cont. Dynam. Sys.} 13, no. 2, 399--411, 2005.

\bibitem{H} N.Haydn.
\newblock Entry and return times distribution.
\newblock \emph{Dyn. Syst.}  28: 3, 333--353, 2013.

\bibitem{HP} N.Haydn and Y. Psiloyenis.
\newblock Return times distribution for Markov towers with decay of correlations.
\newblock \emph{Nonlinearity} 27: 6, 1323--1349,2014.

\bibitem{Hel} G. Helmberg.
\newblock Über mittlere Rückkehrzeit unter einer masstreuen Strömung. (German. English summary)
\newblock \emph{Z. Wahrscheinlichkeitstheorie und Verw. Gebiete} 13, 165--179, 1969. 

\bibitem{LS} Y. Lima and O. Sarig
\newblock Symbolic dynamics for three-dimensional flows with positive topological entropy.
\newblock Preprint 2014.

\bibitem{K} M. Kac.
\newblock On the notion of recurrence in discrete stochastic processes.
\newblock \emph{Bull. Amer. Math. Soc.}, 53, 1002--1010, 1947.

\bibitem{KH95}
A.~Katok and B.~Hasselblatt.
\newblock {\em Introduction to the modern theory of dynamical systems}.
\newblock Cambridge University Press, 1995.

\bibitem{Rt} M. Ratner, \emph{Markov partitions for Anosov flows on $n$-dimensional manifolds}, 
\emph{Israel J. Math.} 15 (1973), 92--114.

\bibitem{R} J. Rousseau.
\newblock Recurrence rates for observations of flows. 
\newblock  \emph{Ergod. Th. Dynam. Sys.} 32, no. 5, 1727--1751, 2012. 

\bibitem{S}  B. Saussol.
\newblock An introduction to quantitative Poincar\'e recurrence in dynamical systems.
\newblock   \emph{Rev. Math. Phys.}, 21:8, 949--979, 2009.

\end{thebibliography}
\end{document}